\newcommand{\Cr}{\mathscr C}
\newcommand{\V}{\textrm{Vert}}
\newcommand{\R}{\mathbb R}
\newcommand{\C}{\mathbb C}
\newcommand{\Arg}{\mathrm{Arg}\,}
\newcommand{\Log}{\mathrm{Log}\,}
\newcommand{\A}{\mathscr A}
\newcommand{\Li}{\mathscr{L}^\infty}
\newcommand\restr[2]{{
  \left.\kern-\nulldelimiterspace 
  #1 
  \right|_{#2} 
  }}
\newtheorem*{mtheorem*}{Main Theorem}
\newtheorem{theorem}{Theorem}[section]
\newtheorem{definition}{Definition}[section]
\newtheorem{proposition}{Proposition}[section]
\newtheorem{corollary}{Corollary}[section]
\newtheorem{lemma}{Lemma}[section]
\begin{document}
\title[A tropical characterization of complex analytic varieties]{A tropical characterization of complex analytic varieties to be algebraic}
\author{Farid Madani, Lamine Nisse and Mounir Nisse}
\date{}

\address{NWF I-Mathematik, Universit\"at Regensburg, 93040 Regensburg, Germany.}
\email{\href{mailto:Farid.Madani@mathematik.uni-regensburg.de}{Farid.Madani@mathematik.uni-regensburg.de}}

\address{Laboratory of Applied Mathematics, University of Badji Mokhtar - Annaba, Algeria.}
\email{\href{mailto:lamine.nisse@univ-annaba.dz}{lamine.nisse@univ-annaba.dz}}
\address{School of Mathematics KIAS, 87 Hoegiro Dongdaemun-gu, Seoul 130-722, South Korea.}
\email{\href{mailto:mounir.nisse@gmail.com}{mounir.nisse@gmail.com}}

\thanks{Research of the third author is partially supported by NSF grant DMS-1001615, and Max Planck Institute for Mathematics, Bonn, Germany.}
\subjclass[2010]{14T05, 32A60}
\keywords{Analytic varieties, algebraic varieties, tropical varieties,  amoebas, coamoebas, logarithmic limit sets, phase limit sets, complex polyhedrons}

\maketitle

\begin{abstract}  In this paper we study a  $k$-dimensional analytic subvariety  of  the complex algebraic torus. We show that if its logarithmic limit set is a finite rational  $(k-1)$-dimensional spherical polyhedron, then each irreducible component of the variety is algebraic. This gives a converse of a theorem of Bieri and Groves and generalizes a result proven in \cite{MN2-11}.  More precisely, if the dimension of the ambient space is at least twice of the dimension of the generic analytic subvariety, then these properties are  equivalent to the volume of the amoeba of the subvariety being  finite. 
\end{abstract}


\section{Introduction}

Amoebas of complex varieties with their cousin  coamoebas  play a major role as a link between  complex algebraic geometry and tropical geometry. Moreover, they are used in several areas of mathematics, in real algebraic geometry, mirror symmetry, algebraic statistics, complex analysis (see \cite{MS-09}, \cite{M2-04},
 \cite{FPT-00}, \cite{NS-12}, \cite{PR-04}, and \cite{PS-04}). We show in this paper that the logarithmic limit sets and the phase limit sets play a role as crucial  as the role played by their relatives. Indeed, their role is a link between complex algebraic geometry and phase tropical geometry. More precisely, this is a quadruplet  (logarithmic limit set, amoeba, coamoeba, phase limit set) and we can not dissociate one of these objects from the others.

We do believe that these objects are not yet fully exploited, and  they contain more information about our original object which is the complex variety. This information can be  apparently of different  nature  e.g., geometric, algebraic,  topological, combinatorial. But they are often equivalent. We  prove in this paper the equivalence  between  some properties of algebraic nature on one hand  and  some properties of  combinatorial and topological  nature, on the other hand. Another equivalence between  geometric  nature properties  and of algebraic nature was proven in \cite{MN2-11}.  More precisely, we show that $k$-dimensional irreducible analytic subvarieties  of the complex torus are algebraic, if their logarithmic limit sets are finite rational  $(k-1)$-dimensional complex polyhedrons.  In addition, if the dimension of the ambient space is at least the double of the dimension of the varieties, then,  these properties are equivalent to the fact that the volume of the amoebas of the varieties are finite, which is a completely geometric property. The last  particular case is proven in \cite{MN2-11}.

In \cite{B-71}, Bergman introduced the notion of logarithmic limit set of a subvariety of the algebraic 
torus as the set of limiting directions of points in its amoeba. 
In  \cite{BG-84}, Bieri and Groves proved the following theorem, conjectured by Bergman. 
\begin{theorem}[Bergman, Bieri--Groves]\label{BBG} The logarithmic 
limit set $\mathscr{L}^{\infty}(V)$ of an algebraic variety $V$ 
in $(\mathbb{C}^*)^n$ is a finite union of rational spherical 
polyhedrons. The maximal dimension of a polyhedron
 $P$ in this union is such that
$\dim_{\mathbb{R}}P = \dim_{\mathbb{R}}\mathscr{L}^{\infty}(V) = \dim_{\mathbb{C}}V - 1$.
\end{theorem}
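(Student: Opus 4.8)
The plan is to identify the logarithmic limit set $\Li(V)$ with a nonarchimedean \emph{tropicalization} of $V$ and then to read off its polyhedral structure and dimension from Gröbner theory. Write $\A(V) = \Log|V| \subseteq \R^n$ for the amoeba, so that by definition a unit vector $u \in S^{n-1}$ lies in $\Li(V)$ exactly when there is a sequence $z^{(j)} \in V$ with $\Log|z^{(j)}| \to \infty$ and $\Log|z^{(j)}|/\bigl|\Log|z^{(j)}|\bigr| \to u$. First I would let $I \subseteq \C[x_1^{\pm 1}, \dots, x_n^{\pm 1}]$ be the ideal defining $V$ and introduce, for each weight $w \in \R^n$, the initial ideal $\mathrm{in}_w(I)$. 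The first main step is to prove the comparison
$$\Li(V) \;=\; \bigl(\{\, w \in \R^n : \mathrm{in}_w(I) \text{ contains no monomial} \,\}\bigr) \cap S^{n-1}.$$
In one direction, a divergent sequence in the amoeba escaping in direction $u$ produces, after passing to a subsequence and using the curve-selection and Puiseux-series technique, a point of $V$ over the field of Puiseux series $\C\{\{t\}\}$ whose coordinatewise valuation is a positive multiple of $u$; such a valuation vector is precisely a weight at which no monomial survives in the initial degeneration. The converse direction builds an explicit divergent family in $V$ out of a monomial-free initial degeneration.

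The second step extracts the polyhedral structure. The Gröbner fan of $I$ subdivides $\R^n$ into finitely many rational polyhedral cones on which $w \mapsto \mathrm{in}_w(I)$ is constant, and because $V$ is defined over $\C$ with the trivial valuation these cones all emanate from the origin. The set $\mathrm{Trop}(V) := \{ w : \mathrm{in}_w(I) \text{ is monomial-free}\}$ is therefore the union of the finitely many closed cones of this fan on which the initial ideal contains no monomial; it is a rational polyhedral fan. Intersecting with $S^{n-1}$ then exhibits $\Li(V)$ as a finite union of rational spherical polyhedrons, which is the first assertion of the theorem.

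For the dimension statement I would invoke the structure theorem for initial degenerations: for $w$ in the relative interior of a cone $\sigma \subseteq \mathrm{Trop}(V)$, the degenerate variety $V(\mathrm{in}_w(I))$ is nonempty, invariant under the subtorus spanned by $\sigma$, and of the same dimension as $V$; purity then forces every maximal cone of $\mathrm{Trop}(V)$ to have dimension $\dim_\C V$. Passing to the sphere drops this by one, giving $\dim_\R \Li(V) = \dim_\C V - 1$. The main obstacle is the first step, the archimedean--nonarchimedean comparison: translating a genuinely analytic statement about divergence of $\Log|z^{(j)}|$ into the algebraic language of initial ideals requires the curve-selection lemma, to realize a limiting direction by an actual Puiseux arc in $V$, together with a uniform estimate showing that the balance of dominant monomials along the arc is governed by $\mathrm{in}_w(I)$. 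The purity needed for the dimension count is the other delicate ingredient, and it rests on the balancing property of $\mathrm{Trop}(V)$.
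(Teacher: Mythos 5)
There is nothing in the paper to compare against here: Theorem \ref{BBG} is quoted as background (it is the Bergman--Bieri--Groves theorem, attributed to \cite{B-71} and \cite{BG-84}; the paper's own contribution is the \emph{converse}), so your proposal can only be measured against the literature. Your architecture --- identify $\Li(V)$ with the trivial-valuation tropicalization $\mathrm{Trop}(V)=\{w\in\R^n:\mathrm{in}_w(I)\text{ monomial-free}\}$, read finiteness and rationality off the Gr\"obner fan, and get the dimension from the theory of initial degenerations --- is the standard modern proof (Einsiedler--Kapranov--Lind; Maclagan--Sturmfels, cited in the paper as \cite{MS-09}), and it is genuinely different from Bieri--Groves' original valuation-theoretic argument. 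Steps two and three are correct as sketched, with one caveat on the dimension count: torus-invariance of the initial degeneration only gives the upper bound $\dim\sigma\le\dim_{\C}V$ for each cone $\sigma$; the matching lower bound comes from a Noether-normalization/projection argument, not from purity or balancing, which are usually established \emph{after} the dimension theorem.

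The one step that would fail as written is your treatment of the inclusion $\Li(V)\subseteq\mathrm{Trop}(V)\cap S^{n-1}$. The value group of the Puiseux field $\C\{\{t\}\}$ is $\mathbb{Q}$, so the coordinatewise valuation of any Puiseux point of $V$ is a \emph{rational} vector; likewise, curve selection produces semialgebraic arcs, along which $\Log$ escapes to infinity in a rational direction. Hence for irrational $u$ --- and most points of $\Li(V)$ are irrational, since $\Li(V)$ is typically positive-dimensional --- there is no Puiseux point whose valuation is a positive multiple of $u$ and no arc realizing $u$, so this step cannot be carried out. Fortunately, this inclusion needs no such machinery: if $\mathrm{in}_u(I)$ contained a monomial, a standard lemma gives a single $f=\sum_\alpha c_\alpha z^\alpha\in I$ whose $u$-leading form is one monomial $c_\beta z^\beta$, i.e.\ $u\cdot\beta>u\cdot\alpha$ for all other $\alpha$ in the support; then along any sequence $z^{(j)}\in V$ with $\Log z^{(j)}=r_ju_j$, $r_j\to\infty$, $u_j\to u$, the term $|c_\beta|e^{r_j\,\beta\cdot u_j}$ eventually dominates the sum of all the others, contradicting $f(z^{(j)})=0$. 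The Puiseux/Kapranov lifting you invoke is the right tool, but it belongs in the \emph{reverse} inclusion $\mathrm{Trop}(V)\cap S^{n-1}\subseteq\Li(V)$, and there only for rational $w$; irrational directions are then recovered because $\Li(V)$ is closed and, by your Gr\"obner-fan step, $\mathrm{Trop}(V)$ is a rational fan, hence the closure of its rational rays. With these two repairs, your outline becomes a complete proof modulo the structure theorems you quote.
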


The aim of this paper  is to prove the converse of Theorem \ref{BBG}. 
\begin{mtheorem*}
Let $V$ be a $k$-dimensional irreducible analytic  subvariety   of 
the complex algebraic torus $(\mathbb{C}^*)^n$ 
and $\mathscr{A}(V)$ be its amoeba.  
Let $\mathscr{L}^{\infty}(V)$  be the logarithmic limit set of $V$. 
Assume that $\mathscr{L}^{\infty}(V)$ is a  finite rational spherical polyhedron of dimension $k-1$. Then $V$
is algebraic.
\end{mtheorem*}

\vspace{0.2cm}

Chow's theorem asserts that any analytic subvariety of the projective space is algebraic. Bieri-Grove's theorem and the main theorem above give a necessary and sufficient condition for a subvariety in the complex algebraic torus to be algebraic.

The key ingredients in our proof are on one side, the topology 
and the combinatorial structure of the logarithmic limit set of our  
variety and on the other side,  their link with the geometry of 
the amoeba.

\vspace{0.2cm}

The paper is organized as follows: In Section 2, we recall some basic definitions and  introduce our  notation.  
In Section 3,  we prove the main theorem and give some consequences.

\vspace{0.2cm}
  
\paragraph{\bf Acknowledgments} 
The third author thanks Professor  Bernd Ammann for the invitation at the University of Regensburg and Max Planck Institute for Mathematics in Bonn and Korean Institute for Advanced Study for their hospitality.

\section{Preliminaries}\label{prelimin}

\vspace{0.2cm}

A subset $Q$ of $\mathbb{R}^n$  is said to be a rational convex polyhedron, if $P$ can be written as the intersection
$$
Q = H_1\cap H_2\cap\cdots\cap H_l
$$
of a finite number of closed affine half spaces in $\mathbb{R}^n$,  where each $H_j$ can be defined in terms of inequalities  of the form $\sum_{i=1}^nq_{ij}x_i\leq a_j$, with rational coefficients $q_{ij}$  and real numbers $a_j$. A subset  $P\subset \mathbb{R}^n$ is said to be a {\em rational complex polyhedron} if it can be written as the union:
$$
P = Q_1\cup Q_2\cup\cdots\cup Q_r 
$$
of a finite number of rational convex polyhedrons. $P$ is said to be homogeneous of dimension $k$ if the dimension of $Q_j$ is equal to $k$, for each  $j=1,\ldots , r$.

A  \emph{rational spherical polyhedron} is a finite union of closed hemispheres which can be written  in terms of  a finite number of inequalities with integral coefficients. 

\vspace{0.2cm}

Let $W$ be an analytic variety in $\mathbb{C}^n$ defined  globally by an ideal $\mathcal{I}$ of holomorphic functions  on $\mathbb{C}^n$. 
We say that a subvariety $V$  of the complex algebraic torus  $(\mathbb{C}^*)^n$ is analytic if there exists an analytic variety $W$ as above such that $V:=W\cap (\mathbb{C}^*)^n$. 

\emph{All the analytic varieties considered in this paper are defined as above}.

The \emph{amoeba}  $\A$ of $V$ is by definition (see M. Gelfand, M.M. Kapranov
 and A.V. Zelevinsky \cite{GKZ-94}) the image of $V$ under the map :
\[
\begin{array}{ccccl}
\Log&:&(\mathbb{C}^*)^n&\longrightarrow&\mathbb{R}^n\\
&&(z_1,\ldots ,z_n)&\longmapsto&(\log |z_1|,\ldots ,\log|
z_n|).
\end{array}
\]


The amoeba of a variety of codimension one  is closed and its complement components in $\mathbb{R}^n$ are convex (see \cite{FPT-00} ). In \cite{H-03}, Henriques  generalized the notion of convexity as follows:
 \begin{definition}
A subset $A\subset \mathbb{R}^n$ is called $l$-convex if for all oriented affine $(l+1)$-planes 
$L \subset \mathbb{R}^n$, the induced homomorphism $H_{l}(L\cap A) \longrightarrow H_{l}(A)$ does not
 send non-zero elements of $\tilde{H}_{l}^+(L\cap A)$ to zero, where $\tilde{H}_l(L\cap A)$ (resp. $\tilde{H}_l^+(L\cap A)$) denotes the reduced homology 
groups associated to the corresponding augmented complexes (resp. elements of $H_{l}(L\cap A)$ such that
 their image in
 $\tilde{H}_l(L\setminus { p})\sim \mathbb{Z}$ are non-negative for all $p\in L\cap A$).
\end{definition} 

When the subset $A$ is the complement of an amoeba, Henriques obtains the following result 

\begin{theorem}[Henriques \cite{H-03}]\label{henri}
Let $V\subset (\mathbb{C}^*)^n$ be a variety of codimension $r$ and $\A$ be its amoeba. Let $L$ be an $r$-plane of rational slope and $c$ be
 a non-zero $(r-1)$-cycle  in $H_{r-1}(L\setminus \mathscr{A}) $. Then the image of $c$ in $H_{r-1}(\mathbb{R}^n\setminus \mathscr{A})$ is non-zero and
 $\mathbb{R}^n\setminus \mathscr{A}$ is $(r-1)$-convex. 
  \end{theorem}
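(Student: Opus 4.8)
The plan is to detect the cycle $c$ by pairing it against a closed $(r-1)$-form on $\R^n\setminus\A$ whose periods compute intersection numbers with $V$; positivity of $c$ then forces this pairing to be nonzero, so $c$ cannot bound in $\R^n\setminus\A$. First I would exploit the rational slope of $L$. Since $L$ is an $r$-plane of rational slope, after an automorphism of $\Cn$ induced by a matrix in $GL_n(\Z)$ together with a real translation (both of which transform $\A$ by an invertible affine map preserving all homological data), I may assume $L$ is parallel to the coordinate plane $\R^r\times\{0\}$. Then $\Log^{-1}(L)=(\C^*)^r\times(S^1)^{n-r}$, and for each $\theta\in(S^1)^{n-r}$ the coset $H_\theta=(\C^*)^r\times\{\theta\}$ is an $r$-dimensional algebraic subtorus translate with $\Log(H_\theta)=L$. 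The codimension hypothesis then yields finite slices: since $V$ has complex codimension $r$ and $\dim_{\C}H_\theta=r$, for generic $\theta$ the intersection $V\cap H_\theta$ is a finite set of points, a well-defined $0$-cycle in $H_\theta$ with $\Log(V\cap H_\theta)\subset\A\cap L$, so one can speak of the intersection number of $V$ with any $r$-chain in $H_\theta$ transverse to $V$.

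Next, the heart of the argument is the construction of the detecting form. Over a point $x\in\R^n\setminus\A$ the torus fiber $\Log^{-1}(x)$ misses $V$, so the angular data of $V$ relative to the fibers varies coherently as $x$ moves; integrating the pullbacks of the forms $d\,\Arg$ along the $S^1$-fibers, that is, performing a fiber integration of the current of integration over $V$ (the Poincar\'e--Lefschetz dual of $V$), produces a closed $(r-1)$-form $\eta$ on $\R^n\setminus\A$. By construction $\int_c\eta$ equals the linking number of the lifted $(2r-1)$-cycle $\Log^{-1}(c)\cap H_\theta$ with the $0$-cycle $V\cap H_\theta$ inside $H_\theta$, that is, the signed count of points of $V\cap H_\theta$ enclosed by $c$. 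This is precisely the higher-codimension analogue of the order map of Forsberg--Passare--Tsikh.

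Positivity then closes the argument. The hypothesis that $c$ represents a nonzero element of $\tilde{H}_{r-1}^+(L\setminus\A)$ guarantees that all the local contributions to this intersection number carry the same sign, so they cannot cancel and $\int_c\eta\neq 0$. Since $\eta$ is globally defined and closed on $\R^n\setminus\A$, a nonzero period forbids $c$ from bounding there; hence the image of $c$ in $H_{r-1}(\R^n\setminus\A)$ is nonzero. As this applies to every rational $r$-plane $L$ and every positive cycle, $\R^n\setminus\A$ is $(r-1)$-convex, which is the assertion of the theorem.

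The step I expect to be the main obstacle is the construction and well-definedness of $\eta$: showing that it is genuinely closed on all of $\R^n\setminus\A$ (independent of the auxiliary transverse coordinate $\theta$ and of the chosen representative of $c$), that its period is an honest integral intersection number, and that the positivity condition on $c$ translates exactly into a uniform sign for the local intersection indices. This amounts to a Poincar\'e--Lefschetz duality computation in the noncompact torus bundle $\Log$, where controlling the boundary behaviour near $\A$ together with the sign bookkeeping is delicate. The rationality of $L$ is essential throughout, since it is what makes each $H_\theta$ an algebraic subgroup coset and the slice $V\cap H_\theta$ a well-defined cycle.
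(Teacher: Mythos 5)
A point of order first: the paper you were given does not prove this statement at all --- it is quoted from Henriques \cite{H-03} and used as a black box (in the proof of Lemma \ref{hen lemm}) --- so your proposal can only be measured against Henriques' original argument. Its outline is indeed the right one and matches that argument in spirit: a monomial ($GL_n(\Z)$) reduction to a coordinate $r$-plane, slicing $\Log^{-1}(L)$ into complex subtorus cosets $H_\theta\cong(\C^*)^r$, and exploiting positivity of intersections with complex analytic sets. But the two steps that carry the whole weight both fail as written.

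The first gap is the construction of $\eta$. The pushforward of the integration current $[V]$ under $\Log$ vanishes identically on $\R^n\setminus\A$, simply because $V\cap\Log^{-1}(\R^n\setminus\A)=\emptyset$ and $\A$ is closed; so ``fiber integration of the Poincar\'e--Lefschetz dual of $V$'' produces the zero form exactly where you need $\eta$ to live. What is actually needed is a primitive: an angular (Green) current $\alpha$ of degree $2r-1$ on $(\C^*)^n\setminus V$ with $d\alpha=[V]$, and then $\eta$ equal to the pushforward under $\Log$ of $\alpha\wedge\pi^*d\theta$, where $\pi\colon(\C^*)^n\to(S^1)^{n-r}$ records the arguments of the last $n-r$ coordinates and $d\theta$ is the Haar form. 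Note the degree bookkeeping: pushing a $(2r-1)$-form down the $n$-torus fibers gives degree $2r-1-n$, not $r-1$; the factor $\pi^*d\theta$ is what fixes this, and it also makes the period of $\eta$ the $\theta$-\emph{average} of the slicewise linking numbers rather than the linking number for a single $\theta$. Moreover, a globally defined $\alpha$ exists only if $[V]$ is exact as a current on $(\C^*)^n$, which must be proved, not assumed. (All of this can be bypassed by a direct Stokes argument: cap $c$ by a chain $W\subset L$; if $c=\partial D$ with $D\subset\R^n\setminus\A$, then $W-D$ bounds a compact chain $B$ in $\R^n$, and properness of $\Log$ plus $d(\pi^*d\theta)=0$ gives $\int_{V\cap\Log^{-1}(W)}\pi^*d\theta=0$.)

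The second gap is the fatal one. Positivity of the winding numbers of $c$ and of the complex intersection multiplicities gives that every local contribution is $\geq 0$, hence that the pairing is $\geq 0$; it does not give $\neq 0$. Nothing in your argument produces a single strictly positive contribution, and your genericity claim is false in general: $V\cap H_\theta$ is finite for generic $\theta$ only when the coordinate projection $V\to(\C^*)^{n-r}$ is dominant. If a component of $V$ lies inside a single coset $H_{\theta_0}$ (for instance a curve contained in a translate of a $2$-dimensional subtorus, with $n=3$, $r=2$), then $V\cap H_\theta=\emptyset$ for almost every $\theta$, the pairing vanishes identically, and your proof yields no information. The theorem survives in such degenerate situations because one can show that then there is no nonzero positive class in $\tilde{H}_{r-1}^{+}(L\setminus\A)$ in the first place; establishing this dichotomy --- either the complex slices detect, for a positive-measure set of $\theta$, the compact union of components of $\A\cap L$ around which $c$ winds positively, or else the class of $c$ was already zero --- is the actual heart of Henriques' theorem. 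It is entirely absent from your proposal, and it is not the ``sign bookkeeping'' you flag at the end: the signs are the easy part.
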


The logarithmic limit set $\mathscr L^\infty (V)$ of  an analytic subvariety $V$ of the complex algebraic torus is the boundary of the closure of $\rho(\A(V))$ in the $n-$dimensional ball $B^n$,
 where $\rho$ is the map defined by (see Bergman \cite{B-71}):
\[
\begin{array}{ccccl}
\rho&:&\mathbb{R}^n&\longrightarrow&B^n\\
&&x&\longmapsto&\rho(x) = \frac{x}{1+|x|}.
\end{array}
\]     
If $V$ is algebraic of dimension $k$, then its {\ logarithmic limit set} is a finite rational spherical polyhedron of dimension $k-1$.

The argument map is the map  defined as follows:
\[
\begin{array}{ccccl}
\Arg&:&(\mathbb{C}^*)^n&\longrightarrow&(S^1)^n\\
&&(z_1,\ldots ,z_n)&\longmapsto&(\arg (z_1),\ldots ,\arg (
z_n) ).
\end{array}
\]
where $\arg (z_j) = \frac{z_j}{|z_j|}$. The {\em coamoeba} of $V$,  denoted by $co\mathscr{A}$, is its image 
under the argument map (defined for the first time by Passare in 2004).

Sottile and the third author \cite{NS-12} defined the {\em phase limit set}  of $V$, $\mathscr{P}^{\infty}(V)$, 
as the set of accumulation points of arguments of sequences in $V$ with unbounded logarithm. If $V$ is an
 algebraic  variety of dimension $k$,  $\mathscr{P}^{\infty}(V)$  contains an arrangement of $k$-dimensional real
 subtori.\\
 
Now, we introduce the notion of ends of an analytic subvariety  of the 
complex algebraic torus. 
Let $V$ be an analytic variety in the complex algebraic torus $(\mathbb{C}^*)^n$, and let $\mathscr{L}^{\infty}(V)$ be its logarithmic limit set.
Let $\{ K_l\}_{l=0}^\infty$ be a family of compact subsets of $V$ such that $K_l\subset K_{l+1}$ for $l=0,\ldots , \infty$, and $V = \bigcup_{l=0}^\infty K_l$.
We define the set of ends of $V$ as follows:
$$
\mathscr{E}^\infty(V) := \bigcap_{l=0}^\infty \left( \overline{V}\setminus K_l\right) ,
$$
where $\overline{V}$ denotes the closure of $V$ in $\mathbb{C}^n$, and each connected component of $\mathscr{E}^\infty(V)$ is called an \emph{end} of $V$.

Let $x$  be a point in $\mathscr{L}^{\infty}(V)$, and $\mathscr{S}_x(V)$ be the set of sequences in $V$ defined as follows:
$$
\mathscr{S}_x(V) := \left\{  s = \{z_l\}_{l=0}^{\infty} \subset V \, |\,  x\in \Li(\{s\})  \right\} .
$$
Let $\mathscr{E}_x^{\infty}(V)$ be the subset of points in $\mathbb{C}^n$ defined as follows:
$$
\mathscr{E}_x^{\infty}(V) := \bigcup_{s= \{z_l\}_{l=0}^{\infty}\in \mathscr{S}_x(V)} \left( \bigcap_{N=0}^{\infty}\{  \overline{s}\setminus  \{z_l\}_{l=0}^{N} \}\right) .
$$
A connected component of $\mathscr{E}_x^{\infty}(V)$ is called an \emph{end of $V$ corresponding to $x$}.

\section{Proof of the main theorem}

Before starting the proof, let us note that the assumption in the main theorem on the irreducibility of the algebraic subvariety is necessary. If $V$ has an infinite number of irreducible components and its logarithmic limit set is a  finite rational spherical polyhedron of dimension $k-1$, then in general it is not possible to conclude weather $V$ is algebraic or not. 
For example, the logarithmic limit set of the  plane analytic curve  $\Cr\subset (C^*)^2$, with defining function $f(z_1,z_2)=\sin(\pi z_1 z_2)$, is $\Li(\Cr)=\{\pm(1,-1)\}$. This is a rational spherical polyhedron of dimension zero, but the curve $\Cr$ is not algebraic.

\begin{proof}[Proof of main theorem]
From now on, we assume that  $V\subset (\C^*)^n$ is a  $k$-dimensional analytic variety, such that its logarithmic limit set is a finite rational spherical polyhedron of dimension $k-1$.
 Moreover, we assume that the ideal $ \mathcal I(V)$ is generated by a set of entire functions $\{f_1,\ldots,f_q\}$, where each entire function $f_j$ can not be written as a product $f_j=hg$, with $h\neq 0$  non constant entire function and $g$ an entire function. Let $\V(\Li(V))$ be the set of vertices of $\Li(V)$. We choose a vertex $v\in \V(\Li(V))$ with slope $(u_1,\ldots,u_n)$. We denote by $D_v$ the straight line in $\R^n$ directed by $v$ and asymptotic to the amoeba $\mathscr{A}(V)$. Let $\mathscr{H}({D_v})$ be  the holomorphic cylinder  which is the lifting of $D_v$ by  $\Log_{|_V}$ and is asymptotic to the end of $V$ corresponding to $v$, such that $\Li(\mathscr{H}({D_v}))\cap \Li(V)=\{v\}$.
The functions $f_j$'s are entire and their power series expansions are of the form $\sum_{\alpha} c_{j\alpha} z^\alpha$  with $b_v+\sum_{i=1}^n u_i\alpha_i\leq 0$, where $b_v$ is a real number. In other words, the  exponents of the power series expansion of
 the $f_j$'s are contained in some half space depending on the slope of the vertex $v$.
By doing the same operation for all the vertices of $\Li(V)$ and using Lemma \ref{hen lemm} below, we conclude that  the exponents of the power series expansion of $f_j$ are contained in a compact polytope.
\end{proof}

 \begin{lemma}\label{hen lemm}
 Let $V$ be a subvariety in $(\C^*)^n$. If $S^{n-2}$ is a subsphere of $S^{n-1}=\partial B^n$, invariant under the involution $-\rm{id}$, then  
$\mathscr L^\infty (V)$ intersects the interior of each connected component of
 $S^{n-1}\setminus S^{n-2}$. 
\end{lemma}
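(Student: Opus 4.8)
The plan is to reformulate the statement and argue by contradiction, using Henriques' convexity theorem (Theorem~\ref{henri}) as the engine. Write $\nu\in\Z^n$ for a primitive integral normal to the great subsphere, so that $S^{n-2}=\{x\in S^{n-1}:\langle x,\nu\rangle=0\}$ and the two components of $S^{n-1}\setminus S^{n-2}$ are the open hemispheres $\{\langle x,\nu\rangle>0\}$ and $\{\langle x,\nu\rangle<0\}$. The conclusion is then equivalent to $\Li(V)\cap\{\langle x,\nu\rangle>0\}\neq\emptyset$ for every $\nu$, i.e.\ to $\Li(V)$ lying in no closed hemisphere, or equivalently to the directions of $\Li(V)$ positively spanning $\R^n$. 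First I would dispose of a genuine exceptional case: if $\Li(V)\subset\nu^\perp$ then the assertion fails, and this happens exactly when $V$ lies in a translate of the subtorus $\{z^\nu=\mathrm{const}\}$; so I reduce to $V$ not contained in any translate of a proper subtorus, for which $\Li(V)$ spans $\R^n$ linearly and is therefore contained in no hyperplane $\nu^\perp$.

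Assume for contradiction $\Li(V)\subset\{\langle x,\nu\rangle\le 0\}$, let $r=n-k$ be the codimension, and let $C$ denote the asymptotic cone of the amoeba $\A$, whose unit vectors are precisely $\Li(V)$; thus $C\subset\{\langle x,\nu\rangle\le 0\}$ and, by the reduction, $C\not\subset\nu^\perp$. The face $C\cap\nu^\perp$ then has dimension at most $k-1$, so a dimension count inside $\nu^\perp$ (of dimension $n-1=r+(k-1)$) lets me pick a rational $r$-dimensional linear subspace $\vec L\subset\nu^\perp$ with $\vec L\cap C=\{0\}$. I position an affine plane $L_0$ directed by $\vec L$ so that the $(r+1)$-plane $L_0+\R\nu$ meets $\A$, and consider the parallel family $L_t:=L_0+t\nu$ sitting at strictly increasing $\nu$-heights.

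Two facts then drive the contradiction. First, each slice $L_t\cap\A$ is compact, since its asymptotic cone is contained in $\vec L\cap C=\{0\}$; moreover $L_t\cap\A=\emptyset$ for all large $t$, because a sequence of points in $L_t\cap\A$ with $t\to+\infty$ would satisfy $\langle\cdot,\nu\rangle\to+\infty$ and, after normalization, produce a limiting direction in $\Li(V)$ which by $\vec L\cap C=\{0\}$ cannot be horizontal, forcing $\langle\cdot,\nu\rangle>0$ and contradicting the assumption. Second, for a height $t_0$ with $K:=L_{t_0}\cap\A\neq\emptyset$, the large enclosing $(r-1)$-sphere $\Sigma_{t_0}\subset L_{t_0}$ is a nonzero cycle in $H_{r-1}(L_{t_0}\setminus\A)$, so by Theorem~\ref{henri} its image in $H_{r-1}(\R^n\setminus\A)$ is nonzero.

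Finally I would slide the enclosing spheres along the family. The set $\bigcup_{t\in[t_0,T]}(L_t\cap\A)=\A\cap\bigl(L_0+[t_0,T]\nu+\vec L\bigr)$ is bounded in the $\nu$-direction and recession-free in the $\vec L$-directions, hence compact, so the spheres $\Sigma_t$ trace a compact tube inside $\R^n\setminus\A$ and give $[\Sigma_{t_0}]=[\Sigma_T]$ in $H_{r-1}(\R^n\setminus\A)$. Taking $T$ large makes $L_T\cap\A=\emptyset$, whence $\Sigma_T$ bounds a ball in $L_T\subset\R^n\setminus\A$ and $[\Sigma_T]=0$, contradicting the previous step; the same argument applied to $-\nu$ produces the intersection with the opposite hemisphere. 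The main obstacle I anticipate is justifying that Theorem~\ref{henri}, together with its orientation/positivity hypothesis on the cycle, indeed applies to the present \emph{analytic} $V$ rather than only to algebraic varieties; secondary points are the reduction to the non-subtorus case (without which the statement is false) and the passage from rational to arbitrary great subspheres $S^{n-2}$ by approximation.
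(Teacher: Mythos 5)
Your proposal runs on the same engine as the paper's proof, namely Henriques' higher convexity of the amoeba complement (Theorem~\ref{henri}), but the two differ enormously in completeness. The paper's entire proof consists of setting $r=n-\dim V-1$, invoking $r$-convexity, and then simply \emph{asserting} that the intersection of the closed half-spaces normal to the directions in $\V(\Li(V))$ is compact; no argument connects the convexity statement to this assertion, and the assertion is really the consequence needed in the main theorem rather than the hemisphere statement of the lemma. Your sliding-sphere argument --- compact and eventually empty slices $L_t\cap\A$, a large enclosing $(r-1)$-sphere that is nonzero in $H_{r-1}(L_{t_0}\setminus\A)$ and hence nonzero in $H_{r-1}(\R^n\setminus\A)$ by Theorem~\ref{henri}, then a homology inside $\R^n\setminus\A$ pushing it to an empty slice where it bounds --- is exactly the derivation the paper omits, and those steps are sound: the slices are compact because $\A$ is closed ($\Log$ is proper) and their recession directions land in $\vec L\cap C=\{0\}$, and the enclosing sphere is a positive cycle, so the positivity hypothesis in Henriques' theorem is satisfied. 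You also spot something the paper does not: the lemma as stated is \emph{false} without a non-degeneracy hypothesis, e.g.\ for $V=\{z_2=1\}\subset(\C^*)^2$ the set $\Li(V)=\{\pm(1,0)\}$ is itself an invariant $0$-subsphere and misses both complementary hemispheres. The concern about Theorem~\ref{henri} being established only for algebraic varieties is shared with the paper, which applies it to analytic $V$ in the same way.

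The one genuine soft spot in your argument is the dimension count. You justify $\dim(C\cap\nu^\perp)\le k-1$ by calling $C\cap\nu^\perp$ a face, which is valid when $C$ is convex; but $C$ is in general a finite union of cones, and nothing in your reduction excludes the configuration $C=Q_1\cup Q_2$ with $Q_1$ a $k$-dimensional cone lying inside $\nu^\perp$ and $Q_2$ dipping into the open half-space $\{\langle x,\nu\rangle<0\}$. This is compatible with your contradiction hypothesis, and $Q_1\cup Q_2$ can span $\R^n$, so $V$ need not sit in a subtorus translate; yet then $\dim(C\cap\nu^\perp)=k$, and inside $\nu^\perp\cong\R^{n-1}$ an $r$-dimensional subspace meets a $k$-dimensional cone generically in dimension $r+k-(n-1)=1$ --- indeed, if the link of $Q_1$ is large enough, \emph{no} admissible $\vec L$ exists, and the construction of compact slices breaks down. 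For algebraic varieties this configuration is ruled out by the balancing condition together with connectedness of the tropicalization of an irreducible variety, but for analytic $V$ that is precisely what is not yet available; worse, at the lemma's stated generality (arbitrary analytic subvarieties) $\Li(V)$ need not be polyhedral or $(k-1)$-dimensional at all --- for the curve $z_2=e^{z_1}$ in $(\C^*)^2$ it is a closed half-circle plus a point, of dimension $k$ --- so the dimensional framework of your step has no foundation there. This defect is of the same nature as the counterexample you already identified, and it afflicts the paper's own two-line proof at least as much; a correct statement needs both a hypothesis of the main-theorem type on $\Li(V)$ and an argument (or assumption) excluding top-dimensional pieces of the recession cone from rational hyperplanes.
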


\begin{proof}
Set $r:=n-\dim V-1$. We know that the complement components of amoebas are $r-$convex by Theorem \ref{henri}. Then the intersection of the closed half spaces in $\mathbb{R}^n$  bounded by the hyperplanes normal to all the directions $v\in \V({\Li}(V))$ is compact.  
\end{proof}

\vspace{0.2cm}

\begin{definition}\label{generic}
An analytic variety $V$ is generic, if $V$ is a finite union of irreducible components and each irreducible component of $V$ contains an open dense subset $U$, such that the Jacobian of the restriction of the logarithmic map to $U$ has maximal rank.
\end{definition}

We have the following proposition and corollaries:

\begin{proposition}\label{prop A}
Let $\mathcal{C}$ be a generic  analytic curve 
(not necessary algebraic) of $(\mathbb{C}^*)^n$. 
Then $\mathscr{L}^{\infty}(\mathcal{C})$ is the union of a finite number of isolated points with rational slopes and a finite number of  geodesic arcs with rational  end slopes. In particular, if $\mathcal{C}$ is not algebraic,  then the number of arcs in $\mathscr{L}^{\infty}(\mathcal{C})$ is different than zero.
\end{proposition}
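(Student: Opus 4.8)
The plan is to read off the structure of $\mathscr{L}^{\infty}(\mathcal{C})$ from the ends of $\mathcal{C}$. Since a generic curve is a finite union of irreducible components, and the amoeba of a union is the union of the amoebas, so that $\mathscr{L}^{\infty}(\mathcal{C})$ contains the $\mathscr{L}^{\infty}$ of each component, I would first reduce to $\mathcal{C}$ irreducible and connected. Each unbounded piece is then an end in the sense of $\mathscr{E}^{\infty}(\mathcal{C})$, and, after a (possibly ramified) normalization, is conformally a punctured disc $\Delta^{*}=\{0<|t|<\varepsilon\}$ on which the coordinate functions $z_i=z_i(t)$ are holomorphic and nowhere zero. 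Because $z_i$ never vanishes, the form $\di\log z_i=(z_i'/z_i)\,\di t$ is a well-defined holomorphic $1$-form on $\Delta^{*}$ with a Laurent expansion; integrating gives $\log z_i=a_i\log t+h_i(t)$, where $a_i=\frac{1}{2\pi i}\oint z_i'/z_i$ is the winding number (rational after the ramified cover) and $h_i(t)=\sum_m b_{i,m}t^m$ is single valued. Writing $t=re^{i\phi}$ yields the basic formula
\[
\log|z_i(t)| = -a_i\log\tfrac{1}{r} + \operatorname{Re} h_i(re^{i\phi}),
\]
from which the limiting directions of $\Log(\mathcal{C})$ along the end are extracted as $r\to 0$.

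The dichotomy between points and arcs comes from whether $h_i$ is bounded. \emph{Tame case.} If every $h_i$ extends holomorphically across $0$, its contribution is negligible against $\log\tfrac1r$, and the direction converges to the single point $-(a_1,\dots,a_n)/|(a_1,\dots,a_n)|$, an isolated point of rational slope. \emph{Wild case.} If some $h_i$ has a pole or an essential singularity, let $-M$ with $M\ge 1$ be the least exponent for which some $b_{i,-M}\neq 0$; this term grows like $r^{-M}$ and dominates. Putting $P=(\operatorname{Re} b_{i,-M})_i$ and $Q=(\operatorname{Im} b_{i,-M})_i$ in $\R^n$, one has $\bigl(\log|z_i|\bigr)_i\sim r^{-M}\bigl(\cos(M\phi)\,P+\sin(M\phi)\,Q\bigr)$, so after normalization the accumulation directions lie on the great circle $S^{n-1}\cap\operatorname{span}(P,Q)$. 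Thus wild ends contribute geodesic arcs. The endpoints occur at the finitely many phases $\phi$ where the leading term degenerates and control passes to the next Laurent order; a Newton--Puiseux bookkeeping of which exponent $m$ dominates as $\phi$ varies shows these transition directions are governed by the integer data $a_i,M$ and are therefore rational.

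Finiteness is the main obstacle and the most delicate step, since two sources of infinity must be excluded. First, within a single end the stratification of the phases $\phi\in[0,2\pi)$ by dominant Laurent exponent must have finitely many strata; here the leading finitely many terms of the $h_i$ suffice, and the genericity hypothesis (maximal rank of the Jacobian of $\Log|_{\mathcal{C}}$ on a dense open set) guarantees that the dominant plane $\operatorname{span}(P,Q)$ is genuinely two-dimensional, so the amoeba is honestly two-dimensional and no stratum collapses pathologically. Second, the number of ends contributing unboundedly to $\mathscr{L}^{\infty}(\mathcal{C})$ must be bounded; for a component defined by the finite system $\{f_1,\dots,f_q\}$ of entire functions, each contributing end produces a full rational great-circle arc or a rational point, so an accumulation of distinct contributions would force a limiting configuration incompatible with Lemma \ref{hen lemm}. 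Combining these, $\mathscr{L}^{\infty}(\mathcal{C})$ is a finite union of rational points and rational geodesic arcs.

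Finally, the ``in particular'' clause is immediate from the Main Theorem. If $\mathcal{C}$ were not algebraic yet $\mathscr{L}^{\infty}(\mathcal{C})$ contained no arc, then on some non-algebraic irreducible component the logarithmic limit set would be a finite set of points of rational slope, that is, a finite rational spherical polyhedron of dimension $0=k-1$. The Main Theorem would then force that component to be algebraic, a contradiction; hence at least one arc must appear.
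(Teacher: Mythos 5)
Your proof takes a genuinely different route from the paper's (the paper argues by contradiction: an irrational-slope point that is isolated or on the boundary of $\mathscr{L}^{\infty}(\mathcal{C})$ would, by Lemma 4.1 of \cite{MN2-11}, force the phase limit set to contain an immersed circle with $2$-dimensional closure, and this is then excluded by an area-conservation property of $\Log\circ\Arg^{-1}$ on holomorphic curves together with Henriques' higher convexity of the amoeba complement). Unfortunately, your route has gaps that are not repairable as written. The first is structural: a non-algebraic analytic curve in $(\C^*)^n$ need not have finitely many ends, and its ends need not be conformally punctured discs, so $\mathscr{L}^{\infty}(\mathcal{C})$ cannot be read off end-by-end. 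For instance the graph of $\tan$, i.e.\ $\{z_2\cos z_1-\sin z_1=0\}\cap(\C^*)^2$, is irreducible and generic, has infinitely many ends, each individually contributing a single rational point, yet the arcs in its logarithmic limit set arise only from the \emph{accumulation} of these infinitely many ends --- a phenomenon your tame/wild dichotomy cannot see. Your finiteness step cannot plug this hole: Lemma \ref{hen lemm} is a lower-bound statement (the limit set must meet every hemisphere component), so it is incapable of bounding the number of ends or of forbidding accumulation of contributions; invoking it to exclude ``a limiting configuration'' is using it backwards.

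The second gap is in the wild case itself. If $h_i$ has an essential singularity, there is no least Laurent exponent $-M$, so the ``dominant term'' on which your whole analysis rests does not exist (already $h(t)=e^{1/t}$, i.e.\ the curve $(z,e^{e^z})$, falls outside both of your cases). And even when every $h_i$ has only a pole, the rationality of the arc endpoints --- the actual content of the proposition --- does not follow from ``Newton--Puiseux bookkeeping'': the endpoint directions produced by your analysis are governed by vectors such as $P=(\operatorname{Re} b_{i,-M})_i$, whose entries are arbitrary real numbers (consider the end of the curve $(e^z,e^{\sqrt{2}z},z)$, where the transition analysis yields endpoint direction $(1,\sqrt{2},0)/\sqrt{3}$); nothing in the integer data $a_i,M$ constrains them. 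Rationality is exactly where the paper has to bring in the transcendental input --- the phase-limit-set lemma of \cite{MN2-11} plus area conservation --- and that mechanism is entirely absent from your argument. Only your last paragraph (deducing the ``in particular'' clause from the Main Theorem applied to each irreducible component, whose limit set would be a finite rational spherical polyhedron of dimension $0$) is sound and consistent with the paper's logical structure.
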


\vspace{0.2cm}

\begin{proof}
It is sufficient to show that any point in $\mathscr{L}^{\infty}(\mathcal{C})$ with irrational slope is necessarily contained in the interior of $\mathscr{L}^{\infty}(\mathcal{C})$. 
 Without loss of generality, we can assume that the curve $\mathcal{C}$ is irreducible. We suppose on the contrary that there exists a point with irrational slope $s$, which is either isolated or is contained in the boundary  of a connected component of $\mathscr{L}^{\infty}(\mathcal{C})$. By Lemma 4.1 \cite{MN2-11}, the phase limit set $\mathscr P^\infty (\mathcal C)$ contains a subset of dimension at least two. More precisely, it contains an immersed circle $S$  of irrational  slope $s$ in the real torus $(S^1)^n$ such that its closure is at least $2$-dimensional. Let $U$ be an open subset of the torus $(S^1)^n$ such that $U\cap S$ is nonempty. Since the closure of the immersed circle $S$ is at least $2$-dimensional, then the intersection $U\cap S$ has an infinite number of connected components. For each such connected component $C_i$, we choose  an open subset $V_i$ of the regular part of the coamoeba such that $\partial \overline{V}_i$ contains $C_i$ and is of area a constant $A$ 
 different than zero where $\overline{V}_i$ denotes the closure of $V_i$ (i.e., the area of $V_i$ is equal to $A$ for all $i$). We claim that the union of the following subsets of the amoeba  $\tilde{V}_i :=(\Log_{|\mathcal{C}})\circ(\Arg_{|\mathcal{C}})^{-1}(V_i)$ is not bounded. Otherwise, if for every open set $U$ of the real torus this union is bounded, and by compactness of the torus, this implies that the amoeba $\mathscr{A}(\mathcal{C})$ has no tentacle of slope $s$. 
 
 Since the map $\Log_{|\mathcal C}\circ\Arg_{|\mathcal C}^{-1}$ conserves the area and $\mathcal{C}$ is generic, then for any positive number $R\gg 1$,  there exists an index $i$ such that the intersection of the sphere $S_{R}^{n-1}$ of radius $R$ with $\tilde{V}_i$ is of dimension one. Moreover,  the length of $I_i:= \tilde{V}_i\cap S_{R}^{n-1}$ does not  converge to zero. In fact,  using the convexity (or higher convexity in the case of higher codimension) of the amoeba complement, the intersection $I_i:= \tilde{V}_i\cap S_{R}^{n-1}$ must converge to a point by hypothesis. But in this case, the area of $\tilde{V}_i$ converges to zero too. This contradicts the fact that  for any index $i$ the area of   $\tilde{V}_i$ is equal to $A$. This implies that if a point $v$ in $\mathscr{L}^{\infty}(\mathcal{C})$ has an irrational slope, then $v$ must be in the interior of the logarithmic limit set.
 
\end{proof}

\vspace{0.2cm}

The phase limit set version of Proposition \ref{prop A} is the following:

\vspace{0.2cm}


\begin{corollary}\label{cor B}
Let $\mathcal{C}$ be a generic  analytic 
curve (not necessary algebraic) of $(\mathbb{C}^*)^n$. 
Then $\mathscr{P}^{\infty}(\mathcal{C})$ is an arrangement  of a finite number of geodesic circles with rational slopes and a finite number of  $2$-dimensional flat tori. In particular, if $\mathcal{C}$ is not algebraic,  the number of $2$-dimensional flat tori in $\mathscr{P}^{\infty}(\mathcal{C})$ is different than zero. Moreover, if $\mathcal{C}$ is not algebraic
and  $n=2$, then the closure of its coamoeba is the whole torus.
\end{corollary}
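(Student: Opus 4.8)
The plan is to push the description of $\mathscr{L}^{\infty}(\mathcal{C})$ from Proposition \ref{prop A} through the correspondence between asymptotic directions of the amoeba and the subtori they generate in the phase limit set. First I would recall the dictionary underlying \cite{NS-12} and Lemma 4.1 of \cite{MN2-11}: a point $v \in \mathscr{L}^{\infty}(\mathcal{C})$ records a limiting direction of $\mathscr{A}(\mathcal{C})$, and along the corresponding end the arguments accumulate on a coset of the subtorus of $(S^1)^n$ associated to the rational subspace carrying $v$. As a calibration, for a rational direction $v$ this coset is the geodesic circle of slope $v$; this matches the algebraic picture in which $\mathscr{P}^{\infty}$ of a curve is an arrangement of $1$-dimensional subtori, one per rational ray of the tropical curve.

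By Proposition \ref{prop A}, $\mathscr{L}^{\infty}(\mathcal{C})$ is a finite union of isolated points of rational slope together with finitely many geodesic arcs whose endpoints have rational slope. Each isolated rational point contributes, by the calibration above, a geodesic circle of rational slope. For an arc $\gamma$, I would use that a geodesic arc on $S^{n-1}$ lies in a single $2$-plane $\Pi \subset \mathbb{R}^n$; since the two endpoints of $\gamma$ are rational, $\Pi$ is a rational $2$-plane, and its image in $(S^1)^n$ is a $2$-dimensional subtorus $T_{\Pi}$. The interior of $\gamma$ contains directions of irrational slope, and by Lemma 4.1 of \cite{MN2-11} each such slope $s$ produces an immersed circle of slope $s$ whose closure is at least $2$-dimensional; as this circle lies in $T_{\Pi}$ and has irrational slope there, its closure is exactly $T_{\Pi}$. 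Thus the whole arc $\gamma$ contributes a single $2$-dimensional flat torus, and summing over the finitely many points and arcs yields the asserted arrangement.

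The remaining assertions follow quickly. If $\mathcal{C}$ is not algebraic, Proposition \ref{prop A} supplies at least one arc, hence by the previous paragraph at least one $2$-dimensional flat torus in $\mathscr{P}^{\infty}(\mathcal{C})$. When moreover $n=2$, the $2$-plane $\Pi$ spanned by the endpoints of such an arc is all of $\mathbb{R}^2$, so $T_{\Pi} = (S^1)^2$; equivalently, the immersed circle of irrational slope coming from an interior direction of the arc is dense in $(S^1)^2$. Since $\mathscr{P}^{\infty}(\mathcal{C}) \subset \overline{co\mathscr{A}(\mathcal{C})}$, the closure of the coamoeba contains, and therefore equals, the whole torus.

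The step I expect to be the main obstacle is making the dictionary between $\mathscr{L}^{\infty}(\mathcal{C})$ and $\mathscr{P}^{\infty}(\mathcal{C})$ genuinely precise: one must verify that an entire arc collapses to a \emph{single} $2$-dimensional coset rather than sweeping out a disconnected or higher-dimensional family, and that the circles attached to rational points carry exactly the rational slope of their defining direction. This is where the rationality of the arc endpoints from Proposition \ref{prop A} and the density statement of Lemma 4.1 of \cite{MN2-11} have to be combined with care, together with the confinement of a spherical geodesic to a $2$-plane.
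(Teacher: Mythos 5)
You take the same route as the paper: the corollary is read as the phase--limit--set counterpart of Proposition \ref{prop A}, with Lemma 4.1 of \cite{MN2-11} translating each direction of $\mathscr{L}^{\infty}(\mathcal{C})$ into a circle or torus inside $\mathscr{P}^{\infty}(\mathcal{C})$. The difference is one of completeness: the paper's own proof is a two-sentence sketch which only records that a generic non-algebraic curve has one-dimensional logarithmic limit set, hence that $\mathscr{P}^{\infty}(\mathcal{C})$ contains an immersed circle whose closure is a torus of dimension at least two; it proves neither the finite-arrangement statement nor, explicitly, the $n=2$ clause. Your proposal attempts both, and the individual ingredients you add --- a geodesic arc lies in a $2$-plane $\Pi$, two linearly independent rational endpoints force $\Pi$ to be rational, an irrational-slope circle is dense in the corresponding coset of the subtorus $T_\Pi$, and $\mathscr{P}^{\infty}(\mathcal{C})\subset \overline{co\mathscr{A}(\mathcal{C})}$ settles the $n=2$ clause --- are correct and go beyond what the paper writes down.

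However, the step you yourself flag as the main obstacle is a genuine gap, and you do not close it. The circle furnished by Lemma 4.1 of \cite{MN2-11} for an interior direction $s$ of an arc is a coset of the one-parameter subgroup in direction $s$, so its closure is a coset of $T_\Pi$, not necessarily $T_\Pi$ itself; more seriously, different interior directions $s$, $s'$ of the same arc may a priori yield circles lying in \emph{different} cosets of $T_\Pi$. Since the interior of the arc contains a continuum of irrational directions, without an argument that only finitely many cosets occur, a single arc could sweep out an infinite (even three-dimensional) family of parallel flat $2$-tori, and the finiteness asserted in the corollary would not follow. There is also an untreated boundary case: when the two rational endpoints of an arc are antipodal they do not span a $2$-plane, so the rationality of $\Pi$ is not automatic; this case occurs for the paper's own example $z\mapsto (z,e^z)$, whose arc has endpoints $\pm(0,1)$ (harmless when $n=2$, where $\Pi=\mathbb{R}^2$, but not in general). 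To be fair, the paper's proof addresses none of these points either; the portion it does argue --- non-algebraic implies at least one torus of dimension at least two, from which the $n=2$ clause follows via $\mathscr{P}^{\infty}(\mathcal{C})\subset \overline{co\mathscr{A}(\mathcal{C})}$ --- is exactly the portion your proposal handles correctly.
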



\begin{proof} 
This corollary is a phase interpretation of Proposition \ref{prop A}. Indeed, if the curve is generic and  not algebraic, then the dimension of its logarithmic set is equal to one. This implies that the phase limit set contains an immersed circle with closure a  torus of dimension at least two.
\end{proof}

\begin{corollary}\label{cor C}
Let $V$ be a $k$-dimensional generic  analytic subvariety of the 
complex algebraic torus $(\mathbb{C}^*)^n$. 
If $V$ is not algebraic, then the closure of the coamoeba 
$\overline{co\mathscr{A}(V)}$ contains a flat torus of 
dimension at least $k+1$.
\end{corollary}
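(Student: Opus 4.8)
The plan is to realize the required torus inside the phase limit set and then invoke the inclusion $\mathscr{P}^{\infty}(V)\subseteq\overline{\coA(V)}$ established by Sottile and the third author in \cite{NS-12} (indeed $\overline{\coA(V)}=\coA(V)\cup\mathscr{P}^{\infty}(V)$). It therefore suffices to exhibit a flat subtorus of $(S^1)^n$ of dimension at least $k+1$ contained in $\mathscr{P}^{\infty}(V)$. Since a finite union of algebraic varieties is algebraic and $V$ is a finite union of irreducible components, at least one component is not algebraic; as each component is generic and of dimension $k$, I may and do assume from the start that $V$ is irreducible.

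First I would show that the non-algebraicity of $V$ forces an irrational direction into $\Li(V)$. Because $V$ is generic, the Jacobian of $\Log_{|V}$ has maximal rank on a dense open subset of $V$, so $\Li(V)$ is a homogeneous polyhedron of dimension $k-1$. If $\Li(V)$ were in addition rational and spherical, the Main Theorem would make $V$ algebraic, against our hypothesis; since genericity already secures the polyhedral structure and the dimension $k-1$, the only remaining way to violate the hypotheses of the Main Theorem is that some end direction $v\in\Li(V)$ has irrational slope.

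Next I would transfer this irrational direction to the phase limit set, generalizing the mechanism behind Proposition \ref{prop A} and Corollary \ref{cor B}. Using Lemma 4.1 of \cite{MN2-11}, to a top-dimensional (i.e. $(k-1)$-dimensional) facet of $\Li(V)$ containing $v$ there should correspond a $k$-dimensional immersed real subtorus $T_v\subset(S^1)^n$ of $\mathscr{P}^{\infty}(V)$ whose slope is prescribed by $v$. As $v$ is irrational, $T_v$ fails to be closed, and its closure $\overline{T_v}$ is a flat subtorus of $(S^1)^n$ of dimension at least $k+1$. Since $\mathscr{P}^{\infty}(V)$ is closed, $\overline{T_v}\subseteq\mathscr{P}^{\infty}(V)\subseteq\overline{\coA(V)}$, which is exactly the flat torus required.

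The main obstacle is the passage from the curve statements to general $k$ in the previous paragraph: one must establish that each $(k-1)$-facet of $\Li(V)$ genuinely gives rise to a $k$-dimensional immersed component of $\mathscr{P}^{\infty}(V)$, and that an irrational slope raises the dimension of its closure by at least one. The cleanest route is probably a reduction to the curve case of Corollary \ref{cor B}, by intersecting $V$ with, or projecting it onto, a suitable generic subtorus chosen so that the resulting curve inherits an irrational end slope and remains generic; the delicate point is to check that such a slicing or projection is compatible with both $\Li$ and $\mathscr{P}^{\infty}$, so that a $2$-dimensional flat torus produced for the curve lifts to a flat torus of dimension at least $k+1$ for $V$.
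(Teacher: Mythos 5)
Your overall mechanism is the right one, and it is the paper's: realize the torus inside the phase limit set, use the inclusion $\mathscr{P}^{\infty}(V)\subseteq\overline{\coA(V)}$ from \cite{NS-12}, and get the dimension jump from the fact that an immersed torus of irrational slope is not closed, exactly as in Proposition \ref{prop A} and Corollary \ref{cor B}. But your first step contains a genuine error. You assert that genericity alone forces $\Li(V)$ to be a homogeneous polyhedron of dimension $k-1$, so that the only possible way for $V$ to escape the Main Theorem is that some direction of a $(k-1)$-dimensional $\Li(V)$ has irrational slope. This is false, and it is contradicted by the paper's own running example: the curve parametrized by $z\mapsto(z,e^z)$ is generic and not algebraic, yet its logarithmic limit set is a closed half-circle, i.e.\ of dimension $1=k$ rather than $k-1=0$. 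Genericity (Definition \ref{generic}) only constrains the rank of the differential of $\Log$ on the variety; it gives no control whatsoever on the dimension or the polyhedrality of $\Li(V)$. The true dichotomy --- which is the content of Proposition \ref{prop A} for curves and is precisely what the paper's proof of Corollary \ref{cor C} invokes in general --- runs in the opposite direction: for generic $V$, a direction of irrational slope can never be a vertex or boundary point of $\Li(V)$, only an interior point, so non-algebraicity forces $\dim\Li(V)\geq k$. The paper then extracts the flat torus of dimension at least $k+1$ from this $k$-dimensional piece of $\Li(V)$. Your scenario (a $(k-1)$-dimensional logarithmic limit set with an irrational vertex) is exactly the configuration that Proposition \ref{prop A} rules out, so an argument built on it cannot be repaired by a small perturbation; the case analysis has to be reorganized around the dimension jump.

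The step you yourself flag as ``the main obstacle'' is also a real gap, not a routine verification: Lemma 4.1 of \cite{MN2-11}, as used in the paper, is a statement about curves (an end direction of slope $s$ produces an immersed circle of slope $s$ in $\mathscr{P}^{\infty}$), and neither your proposed slicing/projection reduction to Corollary \ref{cor B} nor a direct higher-dimensional analogue is carried out; in particular the compatibility of a generic slice or projection with both $\Li$ and $\mathscr{P}^{\infty}$ is exactly the hard point. To be fair, the paper's own three-sentence proof of Corollary \ref{cor C} also leaves this generalization implicit, so on this second point you are no worse off than the source; but as a self-contained argument your proposal both starts from a false structural premise and leaves its key lemma unproven.
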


\begin{proof}
 More precisely, the coamoeba $co\mathscr{A}(V)$ of $V$ contains an immersed $k$-dimensional torus whose closure is of dimension at least $k+1$. In fact, if $V$ is a $k$-dimensional generic  analytic subvariety of the 
complex algebraic torus $(\mathbb{C}^*)^n$ and is not algebraic, then its logarithmic set is at least $k$-dimensional. In other words,
its phase limit  contains a torus of dimension at least $k+1$.
\end{proof}

\begin{figure}[ht!]
     \begin{center}
\subfigure[The amoeba   $\mathscr{A}$  of the analytic curve parametrized by $z\mapsto (z, e^z)$.
]{%
            \label{fig:first}
            \includegraphics[width=0.5\textwidth]{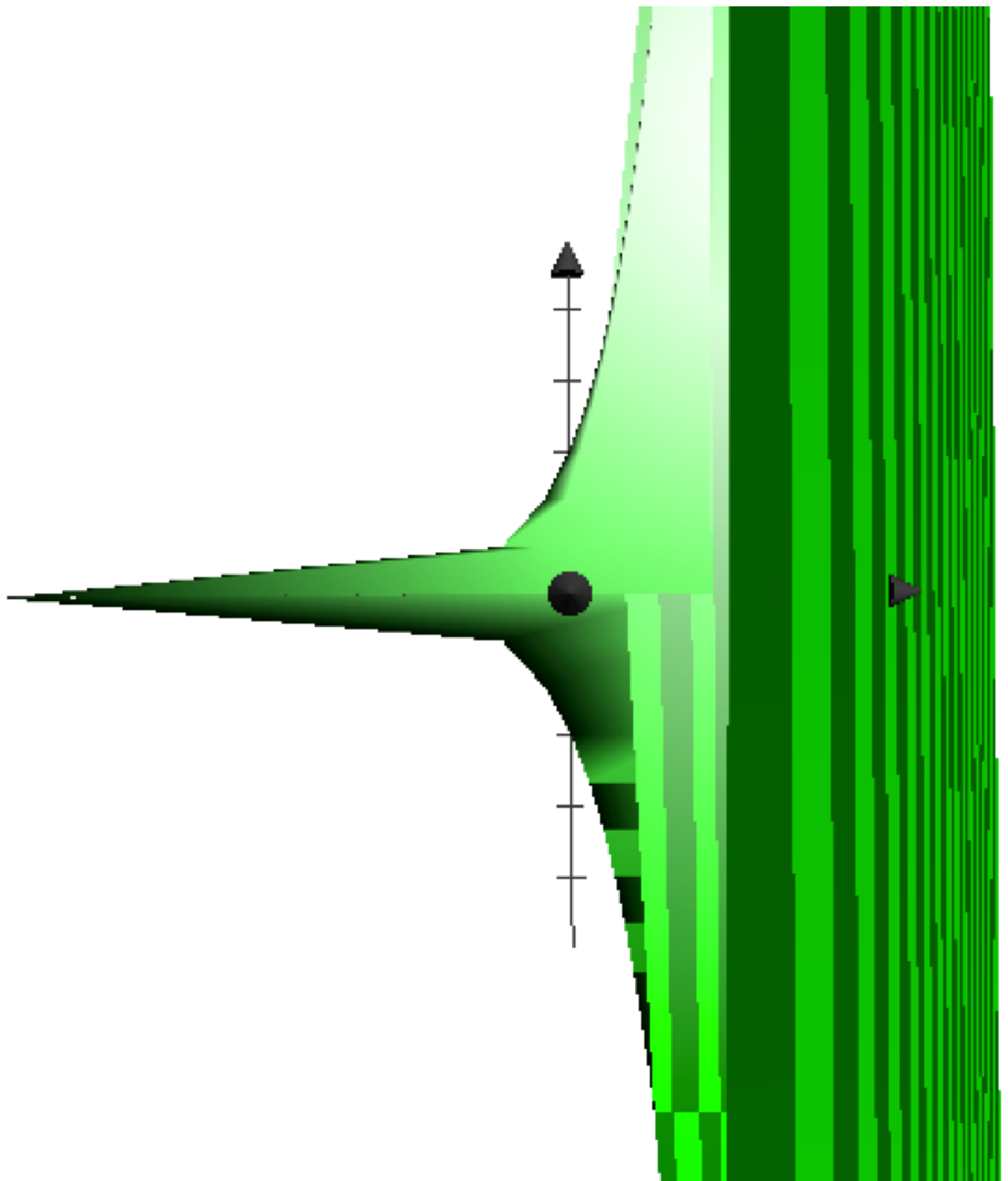}
        }%
        \subfigure[The image  of $\mathscr{A}$ by the retraction $\rho$.]{%
           \label{fig:second}
           \includegraphics[width=0.6\textwidth]{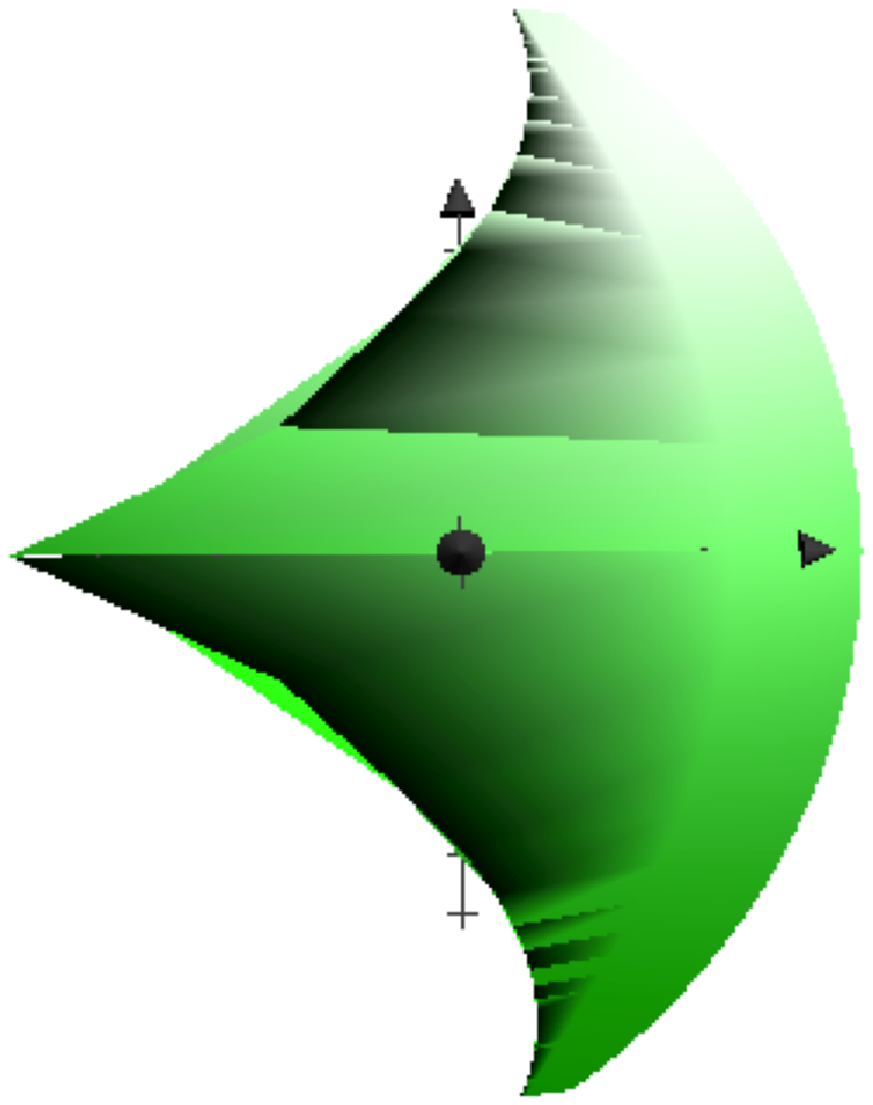}
        }\\

\end{center}
   \label{fig:subfigures}
\end{figure}

\begin{figure}[ht!]
     \begin{center}
        \subfigure[The image by $\rho$ in the ball of the amoeba  of the  curve given by the parametrization $g(t)=(t,e^t,t+1)$.]{%
           \label{fig:second}
           \includegraphics[width=1\textwidth]{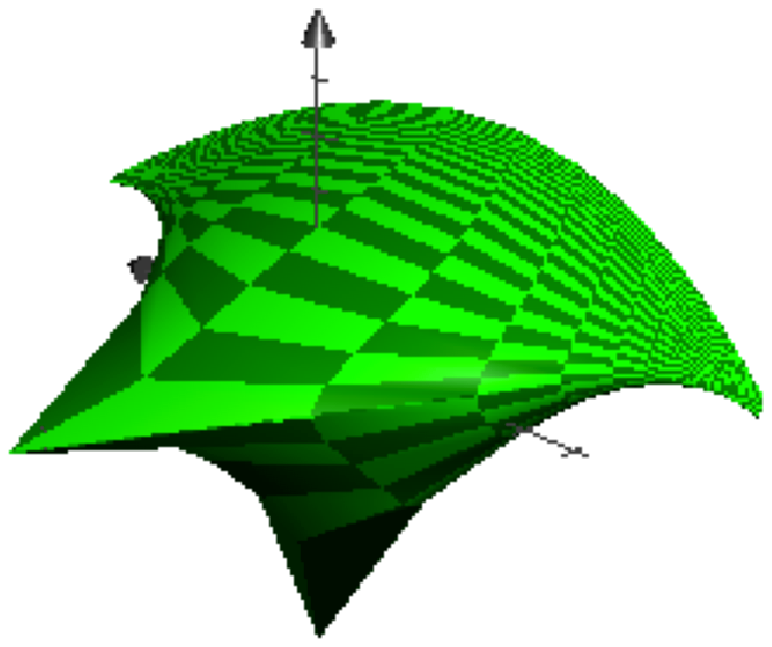}
        }\\

\end{center}
   \label{fig:subfigures}
\end{figure}

\newpage


\begin{thebibliography}{amsalpha}

\bibitem[Ber71]{B-71}{\sc  G. M. Bergman}, {\em The logarithmic limit-set of an algebraic variety},
Trans. Amer. Math. Soc.   {\bf 157}, (1971), 459-469.

\bibitem[BG84]{BG-84}
{\sc R. Bieri and J.R.J. Groves}, {\em The geometry of the set of characters induced by valuations},
J. Reine Angew. Math. {\bf 347}, (1984), 168-195.


\bibitem[FPT00]{FPT-00}{\sc M. Forsberg, M. Passare
and A. Tsikh}, {\em Laurent determinants and arrangements of hyperplane amoebas},
Advances in Math.  {\bf 151}, (2000), 45-70.


\bibitem[GKZ94]{GKZ-94}{\sc I. M. Gelfand, M.
M. Kapranov and A. V. Zelevinski}, {\em Discriminants, resultants and
multidimensional determinants},
Birkh{\"a}user Boston 1994.

\bibitem[Hen03]{H-03}{\sc A. Henriques}, {\em An analogue of convexity for complements of amoebas of varieties of higher codimensions},  Adv. Geom. {\bf 4}, (2003), 61-73.

\bibitem[MS]{MS-09}{\sc   D. Maclagan and B. Sturmfels }, {\em  Introduction to tropical geometry}.  Book in progress, available in Bernd Sturmfels Homepage.


\bibitem[MN14]{MN2-11}{\sc F. Madani and M. Nisse}, {\em Analytic varieties with finite amoeba volumes are algebraic}, J. Reine Angew. Math. DOI: 10.1515/crelle-2013-0066,  to appear.


\bibitem[Mik05]{M2-04}{\sc G. Mikhalkin}, {\em  Enumerative Tropical
Algebraic Geometry In $\mathbb{R}^2$}, J. Amer. Math. Soc. {\bf 18}, (2005), 313-377.



\bibitem[NS13]{NS-12}{\sc M. Nisse  and F. Sottile}, {\em The phase limit set of a variety},  Algebra \& Number Theory, {\bf 7}, (2013), 339--352.


 
\bibitem[PR04]{PR-04}{\sc M. Passare and H.
Rullg\aa rd}, {\em Amoebas, Monge-Amp{\`e}re measures, and triangulations
of the Newton polytope}, Duke Math. J. {\bf 121}, (2004), 481-507.

\bibitem[PS04]{PS-04}{\sc L. Pachter and B. Sturmfels}, {\em Algebraic Statistics for Computational Biology}, Cambridge University Press, 2004.



\end{thebibliography}

\end{document}